\newtheorem{thm}{Theorem}
\newtheorem{lem}[thm]{Lemma}
\newproof{proof}{Proof}
\journal{Elsevier}
\begin{document}

\begin{frontmatter}



\title{Analytical evaluation and asymptotic evaluation of Dawson's integral and related functions in mathematical physics}


\author{V. Nijimbere}
\ead{victornijimbere@gmail.com}
\address{School of Mathematics and Statistics, Carleton University, Ottawa, Ontario, Canada}


\begin{abstract}
Dawson's integral and related functions in mathematical physics that include the complex error function (Faddeeva's integral), Fried-Conte (plasma dispersion) function, (Jackson) function, Fresnel function and Gordeyev's integral are analytically evaluated in terms of the confluent hypergeometric function.
And hence, the asymptotic expansions of these functions on the complex plane $\mathbb{C}$ are derived using the asymptotic expansion of the confluent hypergeometric function. 
\end{abstract}

\begin{keyword}
Dawson's integral \sep  Complex error function \sep Plasma dispersion function \sep Fresnel functions \sep Gordeyev's integral \sep Confluent hypergeometric function \sep asymptotic expansion
\end{keyword}

\end{frontmatter}

\section{Introduction}\label{sec:1}

Let us consider the first-order initial value problem,
\begin{equation}
D^\prime+2zD=1, D(0)=0.
\label{eqn:1.1}
\end{equation}
Its solution given by the definite integral
\begin{equation}
\text{daw}\hspace{.12cm} z=D(z)=e^{-z^2}\int\limits_0^z e^{\eta^2}d\eta
\label{eqn:1.2}
\end{equation}
is known as Dawson's integral \cite{AS,McC,Sc,We}. Dawson's integral is related to several important functions (in integral form) in mathematical physics that include Faddeeva's integral (also know as the complex error function or Kramp function) \cite{FT1,FT2,PW,Mi,We}
\begin{equation}
w(z)=e^{-z^2}\left[1+\frac{2i}{\sqrt{\pi}}e^{z^2}\text{daw}\hspace{.12cm} z\right]=e^{-z^2}\left(1+\frac{2i}{\sqrt{\pi}}\int\limits_0^z e^{\eta^2}d\eta\right),
\label{eqn:1.3}
\end{equation}
Fried-Conte function (or plasma dispersion function) \cite{BT,FC}
\begin{equation}
Z(z)=i\sqrt{\pi}w(z)=i\sqrt{\pi}e^{-z^2}\left[1+\frac{2i}{\sqrt{\pi}}e^{z^2}\text{daw}\hspace{.12cm} z\right]=i\sqrt{\pi}e^{-z^2}\left(1+\frac{2i}{\sqrt{\pi}}\int\limits_0^z e^{\eta^2}d\eta\right),
\label{eqn:1.4}
\end{equation}
(Jackson) function \cite{Ja}
\begin{align}
G(z)&=1+zZ(z)=1+i\sqrt{\pi}zw(z)\nonumber \\ &
=1+i\sqrt{\pi}ze^{-z^2}\left[1+\frac{2i}{\sqrt{\pi}}e^{z^2}\text{daw}\hspace{.12cm} z\right]\nonumber \\ &
=1+i\sqrt{\pi}ze^{-z^2}\left(1+\frac{2i}{\sqrt{\pi}}\int\limits_0^z e^{\eta^2}d\eta\right),
\label{eqn:1.5}
\end{align}
and Fresnel functions $C(x)$ and $S(x)$ \cite{AS} defined by the relation
\begin{equation}
\frac{e^{{i\pi}{z^2}}}{\sqrt{i\pi}}\text{daw}\left({\sqrt{i\pi}}{z}\right)=\int\limits_0^z e^{i\pi\eta^2}d\eta=C(x)+i S(x),
\label{eqn:1.6}
\end{equation}
where
\begin{equation}
C(x)=\int\limits_0^z \cos{(\pi\eta^2)}d\eta \hspace{0.2cm} \text{and} \hspace{0.2cm} S(x)=\int\limits_0^z \sin{(\pi\eta^2)}d\eta.
\label{eqn:1.7}
\end{equation}

There is also Gordeyev's integral \cite{Go} which is related to Dawson's integral via the plasma dispersion (Fried-Conte) function $Z$, and is given
\begin{align}
G_\nu(\omega,\lambda)&=\omega\int\limits_0^\infty e^{i\omega t-\lambda(1-\cos{t})-\nu t^2/2} dt \nonumber \\&
=\frac{-i\omega}{\sqrt{2\nu}}e^{-\lambda}\sum\limits_{n=-\infty}^{\infty}I_n(\lambda)Z\left(\frac{\omega-n}{\sqrt{2\nu}}\right),\hspace{.12cm} \text{Re}(\nu)>0,
\label{eqn:1.8}
\end{align}
where $I_n$ is the Bessel function of the first kind \cite{AS}, the real part of $\omega$ is the wave frequency of an electrostatic wave propagating in a hot magnetized plasma, and $\lambda$ and $\nu$ are respectively the squares of the perpendicular and parallel components of the wave vector \cite{Pa}.

Another function related to Dawson's integral was defined by Sitenko \cite{Si}, and only takes real arguments. It is given by
\begin{equation}
\varphi(x)=2x\hspace{0.1cm} \text{daw}\hspace{.12cm} x=2xe^{-x^2}\int\limits_0^x e^{\eta^2}d\eta.
\label{eqn:1.9}
\end{equation}
In that case, for real argument $x$, Jackson function $G(x)$, given by (\ref{eqn:1.5}), takes the form
\begin{equation}
G(x)=1-\varphi(x)+i\sqrt{\pi}xe^{-x^2}.
\label{eqn:1.10}
\end{equation}

These functions find their applications in astronomy, celestial mechanics, optical physics, plasma physics, planetary atmosphere, radiophysics, spectroscopy and so on \cite{BT,BTG,FC,Go,Ja,Mi,Si,St}. Therefore, it is important to adequately evaluate them.
Having computed Dawson's integral or  Faddeeva's integral, it is then straightforward to compute the other related integrals or functions such as Fried-Conte function, Jackson function, Fresnel integral and Gordeyev function.

First, it is important to point out that Dawson's integral and Faddeeva's integral are non-elementary integrals. Being non-elementary means that they cannot neither be expressed in terms of elementary functions such polynomials of finite degree, exponentials and logarithms, nor in terms of mathematical expressions obtained by performing finite algebraic combinations involving elementary functions \cite{MZ,Ni,Ro}. For this reason, it is not possible to evaluate analytically these integrals in closed form or, in other words, in terms of elementary functions \cite{AQ2,MZ,Ni,Ro}.
To this end, intensive works have mainly focused on numerical approximations \cite{AQ1,AQ2,CG,CP,FT1,FT2,Ga,McC,McK,PW,We,Za}. But numerical integrations do have drawbacks, they become very expensive and inaccurate very quickly as $z$ becomes large or for some values of $z$, see for example the recent work by Abrarov and Quine \cite{AQ2}.

None of the above integrals can be evaluated analytically in close form, or in terms of elementary functions, as pointed out by Abrarov and Quine \cite{AQ2} of course, but one can express them in terms of a special function, the confluent hypergeometric function $_{1}F_1$ \cite{AS,ND}. Noting that Nijimbere \cite{Ni} (Theorem 1) has evaluated the non-elementary integral $\int_a^b e^{\lambda x^\alpha} dx, \alpha\ge2$ for any constant $\lambda$ in terms of the confluent hypergeometric function $_{1}F_1$, as a first objective of this paper, we use the results in Nijimbere \cite{Ni} to obtain an analytical expression for Dawson's integral in terms of the confluent hypergeometric function $_{1}F_1$. And hence, we write Faddeeva's integral and the above related integrals in terms of $_{1}F_1$.

On the other hand, the confluent hypergeometric function is an entire function on the whole complex plane $\mathbb{C}$, and its properties are well known \cite{AS,ND}. For instance, its asymptotic expansion is given in \cite{AS,ND}. Therefore, as a second objective of this work, the asymptotic expansion of the confluent hypergeometric function is used to obtain the asymptotic expansions of the above functions (integrals) on the complex plane $\mathbb{C}$.

\section{Evaluation of Dawson's integral and related function in terms of $_{1}F_1$}\label{sec:2}
In this section, Dawson's integral is evaluated in terms of the confluent hypergeometric $_{1}F_1$, and relations (\ref{eqn:1.3}), (\ref{eqn:1.4}), (\ref{eqn:1.5}), (\ref{eqn:1.6}) and (\ref{eqn:1.8}) are used to express Faddeeva, Fried-Conte, Jackson, Fresnel and Gordeyev integrals respectively in terms of $_{1}F_1$.
\subsection{Evaluation of Dawson's integral in terms of $_{1}F_1$}\label{subsec:2.1}
We use Theorem 1 in Nijimbere \cite{Ni} to express (\ref{eqn:1.2}) in terms of the confluent hypergeometric functions $_{1}F_1$, and then obtain
\begin{equation}
\text{daw}\hspace{.12cm} z=D(z)=e^{-z^2}\int\limits_0^z e^{\eta^2}d\eta= ze^{-z^2}\hspace{.05cm}{}_1F_1\left(\frac{1}{2};\frac{3}{2};z^2\right).
\label{eqn:2.11}
\end{equation}
One may also solve (\ref{eqn:1.1}) and obtain \cite{Ni} the general solution
\begin{equation}
D(z)=e^{-z^2}\left(\int\limits_0^z e^{\eta^2}d\eta+C\right)= ze^{-z^2}\hspace{.05cm}{}_1F_1\left(\frac{1}{2};\frac{3}{2};z^2\right)+C e^{-z^2}.
\label{eqn:2.12}
\end{equation}
Therefore, applying the initial condition $D(0)=0$ gives (\ref{eqn:2.11}).
\subsection{Evaluation of related functions in terms of $_{1}F_1$}\label{subsec:2.2}
We now can evaluate the other functions related to Dawson's integral (see section \ref{sec:1}) in terms of the confluent hypergeometric function. Using (\ref{eqn:1.3}) Faddeeva's integral is now given by
\begin{equation}
w(z)=e^{-z^2}\left(1+\frac{2i}{\sqrt{\pi}}\int\limits_0^z e^{\eta^2}d\eta\right)= e^{-z^2}\left[1+\frac{2i z}{\sqrt{\pi}}\hspace{.05cm}{}_1F_1\left(\frac{1}{2};\frac{3}{2};z^2\right)\right].
\label{eqn:2.13}
\end{equation}
Using (\ref{eqn:1.4}), the plasma dispersion function $Z(z)$ also called Fried-Conte function is given by
\begin{equation}
Z(z)=i\sqrt{\pi} w(z)= i\sqrt{\pi} e^{-z^2}\left[1+\frac{2i z}{\sqrt{\pi}}\hspace{.05cm}{}_1F_1\left(\frac{1}{2};\frac{3}{2};z^2\right)\right].
\label{eqn:2.14}
\end{equation}
Using (\ref{eqn:1.5}), the (Jackson) function, denoted by $G(z)$ is given by
\begin{equation}
G(z)=1+zZ(z)=1+i\sqrt{\pi}zw(z)=1+i\sqrt{\pi} ze^{-z^2}\left[1+\frac{2i z}{\sqrt{\pi}}\hspace{.05cm}{}_1F_1\left(\frac{1}{2};\frac{3}{2};z^2\right)\right].
\label{eqn:2.15}
\end{equation}
Using (\ref{eqn:1.6}) (one may also use Proposition 1 in \cite{Ni}) gives
\begin{equation}
\int\limits_0^z e^{i\pi\eta^2}d\eta=\frac{e^{{i\pi}{z^2}}}{\sqrt{i\pi}}\text{daw}\left({\sqrt{i\pi}}{z}\right)
=z\hspace{.05cm}{}_1F_1\left(\frac{1}{2};\frac{3}{2};i\pi z^2\right),
\label{eqn:2.16}
\end{equation}
which is equivalent to formula 7.3.25 in \cite{AS}. Now using formula (53) in Theorem 2 and formula (57) in Theorem 3 in \cite{Ni} (or using formulas (50) and (51) in \cite{Ni}), we obtain
\begin{equation}
C(x)=\int\limits_0^z \cos{(\pi\eta^2)}d\eta=z\hspace{.05cm}{}_1F_2\left(\frac{1}{4};\frac{1}{2},\frac{5}{4};-\frac{\pi z^2}{4}\right),
\label{eqn:2.17}
\end{equation}
and
\begin{equation}
S(x)=\int\limits_0^z \sin{(\pi\eta^2)}d\eta=\frac{\pi z^3}{3}\hspace{.05cm}{}_1F_2\left(\frac{3}{4};\frac{3}{2},\frac{7}{4};-\frac{\pi z^2}{4}\right).
\label{eqn:2.18}
\end{equation}
Setting $z=\frac{\omega-n}{\sqrt{2\nu}}$ in Fried-Conte function and substituting in (\ref{eqn:1.8}), Gordeyev's integral can now be written in terms of ${}_1F_1$ as
\begin{align}
G_\nu(\omega,\lambda)&= \frac{-i\omega}{\sqrt{2\nu}}e^{-\lambda}\sum\limits_{n=-\infty}^{\infty}I_n(\lambda)Z\left(\frac{\omega-n}{\sqrt{2\nu}}\right) \label{eqn:2.19}\\ &
=\frac{\omega e^{-\lambda}}{\sqrt{2\nu/\pi}}\sum\limits_{n=-\infty}^{\infty}e^{-\left(\frac{\omega-n}{\sqrt{2\nu}}\right)^2}I_n(\lambda)\left\{1+
\frac{2i(\omega-n)}{\sqrt{2\pi\nu}}\hspace{.05cm}{}_1F_1\left[\frac{1}{2};\frac{3}{2};\left(\frac{\omega-n}{\sqrt{2\nu}}\right)^2\right]\right\}.
\label{eqn:2.20}
\end{align}

\section{Asymptotic evaluation}\label{sec:3}
In this section, we derive the asymptotic expansion of Dawson's integral and related functions that include the complex error function, Fried-Conte function, Jackson function, Fresnel functions and Gordeyev's function, using the asymptotic expansion of the confluent hypergeometric ${}_1F_1$. The results are summarized in Theorem \ref{thm:1}, Theorem \ref{thm:2} and Theorem \ref{thm:3}.
\subsection{Asymptotic evaluation of Dawson's integral}\label{subsec:3.1}
\begin{lem}
For $|z|\gg1$,

\begin{multline}
z\hspace{.075cm}{}_1F_1\left(\frac{1}{\alpha};\frac{1}{\alpha}+1;z^\alpha\right)\\ \sim\begin{cases}
\Gamma\left(\frac{1}{\alpha}+1\right)e^{\pm i\frac{\pi}{\alpha}}\frac{z}{|z|}\left[1+\frac{\Gamma\left(\frac{1}{\alpha}+1\right)}{z^\alpha}+O\left(\frac{1}{z^{2\alpha}}\right)\right]+\frac{e^{z^\alpha}}{\alpha z^{\alpha-1}}\left[1+\frac{\Gamma\left(2-\frac{1}{\alpha}\right)}{z^\alpha}+O\left(\frac{1}{z^{2\alpha}}\right)\right], & \text{if $\alpha$ is even} \\
\Gamma\left(\frac{1}{\alpha}+1\right)e^{\pm i\frac{\pi}{\alpha}}\left[1+\frac{\Gamma\left(\frac{1}{\alpha}+1\right)}{z^\alpha}+O\left(\frac{1}{z^{2\alpha}}\right)\right]+\frac{e^{z^\alpha}}{\alpha z^{\alpha-1}}\left[1+\frac{\Gamma\left(2-\frac{1}{\alpha}\right)}{z^\alpha}+O\left(\frac{1}{z^{2\alpha}}\right)\right], & \text{if $\alpha$ is odd} \\
\end{cases}\\
\label{eqn:3.21}
\end{multline}
where $\alpha$ is a constant, and the positive sign is taken if
\begin{equation}
-\frac{\pi}{2\alpha}+\frac{2k\pi}{\alpha} <\text{arg}(z)< \frac{3\pi}{2\alpha}+\frac{2k\pi}{\alpha},
\label{eqn:3.22}
\end{equation}
while the negative sign is taken if
\begin{equation}
-\frac{3\pi}{2\alpha}+\frac{2k\pi}{\alpha} <\text{arg}(z)< -\frac{\pi}{2\alpha}+\frac{2k\pi}{\alpha},
\label{eqn:3.23}
\end{equation}
$k=0,1,2,\cdot\cdot\cdot$.
\label{lm:1}
\end{lem}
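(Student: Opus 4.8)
The plan is to derive (\ref{eqn:3.21}) directly from the large-argument asymptotic expansion of the confluent hypergeometric function ${}_1F_1(a;b;\zeta)$ recorded in \cite{AS,ND}, which for $|\zeta|\gg 1$ splits into an \emph{algebraic} part proportional to $e^{\pm i\pi a}\zeta^{-a}$ and an \emph{exponential} part proportional to $e^{\zeta}\zeta^{a-b}$, the choice of sign $\pm$ being dictated by $\arg\zeta$. I would specialise this to $a=\tfrac{1}{\alpha}$, $b=\tfrac{1}{\alpha}+1$, $\zeta=z^{\alpha}$, so that $\Gamma(b-a)=\Gamma(1)=1$ and $\Gamma(b)/\Gamma(a)=1/\alpha$, and then multiply the whole expansion by the external factor $z$ sitting in front of ${}_1F_1$. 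The two Gamma prefactors and the leading Pochhammer coefficients of each series are then read off, so that the bracketed corrections $\left[1+\cdots z^{-\alpha}+O(z^{-2\alpha})\right]$ in (\ref{eqn:3.21}) acquire their explicit form.

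Next I would simplify the two pieces separately. Using $\zeta^{a-b}=z^{-\alpha}$ and $e^{\zeta}=e^{z^{\alpha}}$, the exponential part, after multiplication by $z$, collapses into $\dfrac{e^{z^{\alpha}}}{\alpha z^{\alpha-1}}\left[1+\cdots\right]$, which already matches the second summand in both cases of the statement. The algebraic part becomes $\Gamma\!\left(\tfrac{1}{\alpha}+1\right)e^{\pm i\pi/\alpha}\,z\,(z^{\alpha})^{-1/\alpha}\left[1+\cdots\right]$, so that the entire content of the even/odd dichotomy is concentrated in the single multivalued factor $z\,(z^{\alpha})^{-1/\alpha}$.

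The step I expect to be the main obstacle is precisely the careful treatment of this factor together with the $\arg\zeta$-dependent sign. Since $\arg\zeta=\alpha\,\arg(z)$ modulo $2\pi$, the validity sectors $-\tfrac{\pi}{2}<\arg\zeta<\tfrac{3\pi}{2}$ (for the $+$ sign) and $-\tfrac{3\pi}{2}<\arg\zeta<-\tfrac{\pi}{2}$ (for the $-$ sign) must be pulled back through the $\alpha$-fold angle map, and the residual $2\pi$-ambiguity $\arg\zeta=\alpha\arg(z)-2k\pi$ is exactly what produces the shifted sectors (\ref{eqn:3.22}) and (\ref{eqn:3.23}) indexed by $k$. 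Reducing $\arg(z^{\alpha})$ to its principal value before extracting the $-1/\alpha$ power introduces a root-of-unity phase, and tracking this phase is what distinguishes the two parities, leaving the residual factor $z/|z|$ when $\alpha$ is even and none when $\alpha$ is odd. I would carry this out by fixing $k$, restricting $z$ to the corresponding sector, writing $z=|z|e^{i\theta}$, and verifying the phase bookkeeping sector by sector; once the branch of $(z^{\alpha})^{-1/\alpha}$ is pinned down, the remaining manipulations are elementary. As an independent check of the phases I would compare against the identity ${}_1F_1\!\left(\tfrac{1}{\alpha};\tfrac{1}{\alpha}+1;z^{\alpha}\right)=\tfrac{1}{\alpha}\,(-z^{\alpha})^{-1/\alpha}\,\gamma\!\left(\tfrac{1}{\alpha},-z^{\alpha}\right)$ linking this function to the lower incomplete gamma function, whose classical large-argument behaviour $\gamma(s,y)\sim\Gamma(s)-y^{s-1}e^{-y}\left[1+\cdots\right]$ reproduces the same two-part structure and the same leading prefactors.
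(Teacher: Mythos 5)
Your proposal takes essentially the same route as the paper's own proof: both specialize the large-argument expansion (\ref{eqn:3.24}) of ${}_1F_1\left(a;b;\xi\right)$ from \cite{AS} at $a=\frac{1}{\alpha}$, $b=\frac{1}{\alpha}+1$, $\xi=z^\alpha$, simplify the exponential and algebraic contributions separately after multiplying by $z$, and recover the sectors (\ref{eqn:3.22})--(\ref{eqn:3.23}) by pulling the condition on $\text{arg}(\xi)$ back through $\text{arg}(\xi)=\alpha\,\text{arg}(z)$, with the even/odd dichotomy concentrated in the multivalued factor $z\left(z^\alpha\right)^{-1/\alpha}$ exactly as in the paper's step (\ref{eqn:3.28}). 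The only element not in the paper is your cross-check against the lower incomplete gamma function; otherwise the decomposition, the key formula, and the sector bookkeeping coincide with the paper's argument.
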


\begin{proof}
To prove (\ref{eqn:3.21}), we use the asymptotic expansion of the confluent hypergeometric function valid for $|\xi|\gg 1$ (\cite{AS}, formula 13.5.1),
\begin{multline}
\frac{{}_1F_1\left(a;b;\xi\right)}{\Gamma(b)}=\frac{e^{\pm i\pi a}\xi^{-a}}{\Gamma(b-a)}\left\{\sum\limits_{n=0}^{R-1}\frac{(a)_n (1+a-b)_n}{n!}(-\xi)^{-n}+O(|\xi|^{-R})\right\}\\
+\frac{e^{\xi}\xi^{a-b}}{\Gamma(a)}\left\{\sum\limits_{n=0}^{S-1}\frac{(b-a)_n (1-a)_n}{n!}(\xi)^{-n}+O(|\xi|^{-S})\right\},
\label{eqn:3.24}
\end{multline}
where $a$ and $b$ are constants, and
the positive sign being taken if

\begin{equation}
-\frac{\pi}{2}<\text{arg}(\xi)<\frac{3\pi}{2}, \label{eqn:3.25}
\end{equation}
and the negative sign if

\begin{equation}
-\frac{3\pi}{2}<\text{arg}(\xi)\le-\frac{\pi}{2}.
\label{eqn:3.26}
\end{equation}

We now set $\xi=z^\alpha, a=\frac{1}{\alpha}$ and $b=\frac{1}{\alpha}+1$ in (\ref{eqn:3.23}), and obtain
\begin{multline}
\frac{{}_1F_1\left(\frac{1}{\alpha};\frac{1}{\alpha}+1; z^\alpha\right)}{\Gamma\left(\frac{1}{\alpha}+1\right)}=\frac{e^{i\pm\frac{\pi}{\alpha}}}{( z^\alpha)^{\frac{1}{\alpha}}}\left\{\sum\limits_{n=0}^{R-1}\frac{\left(\frac{1}{\alpha}\right)_n }{n!}(z^\alpha)^{-n}+O\left( z^\alpha\right)^{-R}\right\} \\
+\frac{1}{\Gamma\left(\frac{1}{\alpha}\right)}\frac{e^{z^\alpha}}{z^\alpha}\left\{\sum\limits_{n=0}^{S-1}\frac{(1)_n \left(1-\frac{1}{\alpha}\right)_n}{n!}(z^\alpha)^{-n}+O\left(z^\alpha\right)^{-S}\right\}.
\label{eqn:3.27}
\end{multline}

Then for $|z|\gg1$,
\begin{equation}
\frac{e^{\pm i\frac{\pi}{\alpha}}}{(z^\alpha)^{\frac{1}{\alpha}}}\left\{\sum\limits_{n=0}^{R-1}\frac{\left(\frac{1}{\alpha}\right)_n }{n!}(z^\alpha)^{-n}+O\left(z^\alpha\right)^{-R}\right\} \\ \sim\begin{cases}
\frac{e^{\pm i\frac{\pi}{\alpha}}}{|z|}\left[1+\frac{\Gamma\left(\frac{1}{\alpha}+1\right)}{z^\alpha}+O\left(\frac{1}{z^{2\alpha}}\right)\right], & \text{if $\alpha$ is even}  \\
\frac{e^{\pm i\frac{\pi}{\alpha}}}{z}\left[1+\frac{\Gamma\left(\frac{1}{\alpha}+1\right)}{z^\alpha}+O\left(\frac{1}{z^{2\alpha}}\right)\right], & \text{if $\alpha$ is odd} \\
\end{cases}
\label{eqn:3.28}
\end{equation}
while
\begin{equation}
\frac{1}{\Gamma\left(\frac{1}{\alpha}\right)}\frac{e^{z^\alpha}}{z^\alpha}\left\{\sum\limits_{n=0}^{S-1}\frac{(1)_n \left(\frac{1}{\alpha}\right)_n}{n!}(z^\alpha)^{-n}+O\left(z^\alpha\right)^{-S}\right\}\sim \frac{1}{\Gamma\left(\frac{1}{\alpha}\right)}\frac{e^{z^\alpha}}{z^\alpha}\left[1+\frac{\Gamma\left(2-\frac{1}{\alpha}\right)}{z^\alpha}+O\left(\frac{1}{z^{2\alpha}}\right)\right].
\label{eqn:3.29}
\end{equation}
Therefore for $|z|\gg1$,
\begin{multline}
{}_1F_1\left(\frac{1}{\alpha};\frac{1}{\alpha}+1;z^\alpha\right)\\ \sim\begin{cases}
\Gamma\left(\frac{1}{\alpha}+1\right)\frac{e^{\pm i\frac{\pi}{\alpha}}}{|z|}\left[1+\frac{\Gamma\left(\frac{1}{\alpha}+1\right)}{z^\alpha}+O\left(\frac{1}{z^{2\alpha}}\right)\right]+\frac{e^{z^\alpha}}{\alpha z^\alpha}\left[1+\frac{\Gamma\left(2-\frac{1}{\alpha}\right)}{z^\alpha}+O\left(\frac{1}{z^{2\alpha}}\right)\right], & \text{if $\alpha$ is even} \\
\Gamma\left(\frac{1}{\alpha}+1\right)\frac{e^{\pm i\frac{\pi}{\alpha}}}{z}\left[1+\frac{\Gamma\left(\frac{1}{\alpha}+1\right)}{z^\alpha}+O\left(\frac{1}{z^{2\alpha}}\right)\right]+\frac{e^{z^\alpha}}{\alpha z^\alpha}\left[1+\frac{\Gamma\left(2-\frac{1}{\alpha}\right)}{z^\alpha}+O\left(\frac{1}{z^{2\alpha}}\right)\right], & \text{if $\alpha$ is odd} \\
\end{cases}\\
\label{eqn:3.30}
\end{multline}
Hence for $|z|\gg1$,
\begin{multline}
z\hspace{.075cm}{}_1F_1\left(\frac{1}{\alpha};\frac{1}{\alpha}+1;z^\alpha\right)\\ \sim\begin{cases}
\Gamma\left(\frac{1}{\alpha}+1\right)e^{\pm i\frac{\pi}{\alpha}}\frac{z}{|z|}\left[1+\frac{\Gamma\left(\frac{1}{\alpha}+1\right)}{z^\alpha}+O\left(\frac{1}{z^{2\alpha}}\right)\right]+\frac{e^{z^\alpha}}{\alpha z^{\alpha-1}}\left[1+\frac{\Gamma\left(2-\frac{1}{\alpha}\right)}{z^\alpha}+O\left(\frac{1}{z^{2\alpha}}\right)\right], & \text{if $\alpha$ is even} \\
\Gamma\left(\frac{1}{\alpha}+1\right)e^{\pm i\frac{\pi}{\alpha}}\left[1+\frac{\Gamma\left(\frac{1}{\alpha}+1\right)}{z^\alpha}+O\left(\frac{1}{z^{2\alpha}}\right)\right]+\frac{e^{z^\alpha}}{\alpha z^{\alpha-1}}\left[1+\frac{\Gamma\left(2-\frac{1}{\alpha}\right)}{z^\alpha}+O\left(\frac{1}{z^{2\alpha}}\right)\right], & \text{if $\alpha$ is odd} \\
\end{cases}\\
\label{eqn:3.31}
\end{multline}

On the other hand, we observe that
$$ \xi=z^\alpha\Rightarrow z=\xi^{\frac{1}{\alpha}}=|\xi|^{\frac{1}{\alpha}}\left[e^{i\hspace{.1cm}\text{arg}(\xi)}\right]^{\frac{1}{\alpha}}=|\xi|^{\frac{1}{\alpha}}
e^{i\frac{\text{arg}(\xi)}{\alpha}}.$$
Therefore,  (\ref{eqn:3.25}) gives
$$-\frac{\pi}{2\alpha}+\frac{2k\pi}{\alpha} <\text{arg}(z)< \frac{3\pi}{2\alpha}+\frac{2k\pi}{\alpha},$$
which is exactly (\ref{eqn:3.22}), while (\ref{eqn:3.26}) gives
$$-\frac{3\pi}{2\alpha}+\frac{2k\pi}{\alpha} <\text{arg}(z)< -\frac{\pi}{2\alpha}+\frac{2k\pi}{\alpha},$$
which is exactly (\ref{eqn:3.23}), and where $k=0,1,2,\cdot\cdot\cdot.$
\end{proof}

\begin{thm} For $|z|\gg1$ if $-\frac{\pi}{4}+k\pi <\text{arg}(z)< \frac{3\pi}{4}+k\pi, k=0,1,2,\cdot\cdot\cdot$,
then Dawson's integral is asymptotically given by
\begin{equation}
\text{daw}\hspace{.15cm} z\sim i\frac{\sqrt{\pi}}{2}e^{-z^2}\left[1+\frac{\sqrt{\pi}}{2z^2}+O\left(\frac{1}{z^4}\right)\right]+\frac{1}{2z}+\frac{\sqrt{\pi}}{4z^3}
+O\left(\frac{1}{z^5}\right).
\label{eqn:3.32a}
\end{equation}
While, on the other hand, it is given by
\begin{equation}
\text{daw}\hspace{.15cm} z\sim-i\frac{\sqrt{\pi}}{2}e^{-z^2}\left[1+\frac{\sqrt{\pi}}{2z^2}+O\left(\frac{1}{z^4}\right)\right]+\frac{1}{2z}+\frac{\sqrt{\pi}}{4z^3}
+O\left(\frac{1}{z^5}\right)
\label{eqn:3.32b}
\end{equation}
if $-\frac{3\pi}{4}+k\pi <\text{arg}(z)< -\frac{\pi}{4}+k\pi,k=0,1,2,\cdot\cdot\cdot$.
\\\\Therefore, for $|z|\gg1$, if $-\frac{\pi}{4}+k\pi <\text{arg}(z)< \frac{3\pi}{4}+k\pi,k=0,1,2,\cdot\cdot\cdot$, then
\begin{enumerate}
\item Feddeeva's integral $w(z)$ is approximated  by
\begin{equation}
w(z)\sim
e^{-z^2}\left[-\frac{\sqrt{\pi}}{2z^2}+O\left(\frac{1}{z^4}\right)\right]+\frac{i}{\sqrt{\pi}}\left[\frac{1}{z}+\frac{\sqrt{\pi}}{2z^3}
+O\left(\frac{1}{z^5}\right)\right],
\label{eqn:3.33a}
\end{equation}
\item Fried-Conte (plasma dispersion) function $Z(z)$ by
\begin{equation}
Z(z)\sim  i e^{-z^2}\left[-\frac{\pi}{2z^2}+O\left(\frac{1}{z^4}\right)\right]-\frac{1}{2z}-\frac{\sqrt{\pi}}{4z^3}
+O\left(\frac{1}{z^5}\right)
\label{eqn:3.34a}
\end{equation}
\item and (Jackson) function $G(z)$ by
\begin{equation}
G(z)\sim ie^{-z^2}\left[-\frac{\pi}{2z}+O\left(\frac{1}{z^3}\right)\right]-\frac{\sqrt{\pi}}{2z^2}
+O\left(\frac{1}{z^4}\right).
\label{eqn:3.35a}
\end{equation}
\end{enumerate}

While on the other hand, if $-\frac{3\pi}{4}+k\pi <\text{arg}(z)< -\frac{\pi}{4}+k\pi$, then

\begin{enumerate}
\item Feddeeva's integral $w(z)$ is approximated  by
\begin{equation}
w(z)\sim
e^{-z^2}\left[2+\frac{\sqrt{\pi}}{2z^2}+O\left(\frac{1}{z^4}\right)\right]+\frac{i}{\sqrt{\pi}}\left[\frac{1}{z}+\frac{\sqrt{\pi}}{2z^3}
+O\left(\frac{1}{z^5}\right)\right],
\label{eqn:3.33b}
\end{equation}
\item Fried-Conte (plasma dispersion) function $Z(z)$ by
\begin{equation}
Z(z)\sim
i\sqrt{\pi}e^{-z^2}\left[2+\frac{\sqrt{\pi}}{2z^2}+O\left(\frac{1}{z^4}\right)\right]-\frac{1}{z}-\frac{\sqrt{\pi}}{2z^3}
+O\left(\frac{1}{z^5}\right),
\label{eqn:3.34b}
\end{equation}
\item and (Jackson) function $G(z)$ by
\begin{equation}
G(z)\sim
i\sqrt{\pi}e^{-z^2}\left[2z+\frac{\sqrt{\pi}}{2z}+O\left(\frac{1}{z^3}\right)\right]-\frac{\sqrt{\pi}}{2z^2}
+O\left(\frac{1}{z^4}\right)
\label{eqn:3.35b}
\end{equation}
\end{enumerate}
\label{thm:1}
\end{thm}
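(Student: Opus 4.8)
The plan is to reduce every assertion to the single asymptotic formula of Lemma \ref{lm:1} together with the algebraic identities (\ref{eqn:2.11})--(\ref{eqn:2.15}) that express each function in terms of ${}_1F_1\!\left(\frac12;\frac32;z^2\right)$. First I would establish the two expansions (\ref{eqn:3.32a}) and (\ref{eqn:3.32b}) for Dawson's integral, which form the base case from which everything else follows. Since (\ref{eqn:2.11}) reads $\text{daw}\,z=e^{-z^2}\bigl[z\,{}_1F_1(\frac12;\frac32;z^2)\bigr]$, the bracket is precisely the quantity whose asymptotics Lemma \ref{lm:1} supplies with the choice $\alpha=2$ (the even case). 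I would then specialise the constants: $\Gamma(\frac1\alpha+1)=\Gamma(\frac32)=\frac{\sqrt\pi}{2}$, $\Gamma(2-\frac1\alpha)=\Gamma(\frac32)=\frac{\sqrt\pi}{2}$, and $e^{\pm i\pi/\alpha}=e^{\pm i\pi/2}=\pm i$, and note that the angular conditions (\ref{eqn:3.22})--(\ref{eqn:3.23}) collapse, for $\alpha=2$, to the sectors $-\frac\pi4+k\pi<\arg(z)<\frac{3\pi}4+k\pi$ and $-\frac{3\pi}4+k\pi<\arg(z)<-\frac\pi4+k\pi$ stated in the theorem. Multiplying the resulting expansion by the prefactor $e^{-z^2}$ cancels the $e^{z^\alpha}=e^{z^2}$ sitting in the second group, turning $\tfrac{e^{z^2}}{2z}[1+\tfrac{\sqrt\pi}{2z^2}+\cdots]$ into the algebraic tail $\tfrac1{2z}+\tfrac{\sqrt\pi}{4z^3}+\cdots$, while leaving the first group as the exponential term $\pm i\tfrac{\sqrt\pi}{2}e^{-z^2}[1+\cdots]$; collecting these reproduces (\ref{eqn:3.32a}) and (\ref{eqn:3.32b}).

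Next I would propagate this base case through the linear identities. For Faddeeva's integral I would rewrite (\ref{eqn:2.13}) as $w(z)=e^{-z^2}+\tfrac{2i}{\sqrt\pi}\,\text{daw}\,z$, substitute the Dawson expansion, and use $\tfrac{2i}{\sqrt\pi}\cdot(\pm i)\tfrac{\sqrt\pi}{2}=\mp1$ to combine the standalone $e^{-z^2}$ with the exponential part of $\text{daw}\,z$; this yields (\ref{eqn:3.33a}) and (\ref{eqn:3.33b}). The Fried--Conte function then follows instantly from the scaling $Z(z)=i\sqrt\pi\,w(z)$ in (\ref{eqn:2.14}), and the Jackson function from $G(z)=1+zZ(z)$ in (\ref{eqn:2.15}); in the latter, the constant $1$ cancels against the leading algebraic term of $zZ(z)$, which is exactly why (\ref{eqn:3.35a})--(\ref{eqn:3.35b}) begin at order $z^{-2}$ in their algebraic part. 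Each of these steps is a bounded linear combination and a multiplication by $z$, operations under which asymptotic expansions may be manipulated termwise, so no further analytic input is needed.

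The main obstacle, and the feature deserving the most care, is the two-sector (Stokes) structure. The sign ambiguity $e^{\pm i\pi/2}=\pm i$ in Lemma \ref{lm:1} is not cosmetic: across the ray $\arg(z)=-\tfrac\pi4+k\pi$ the sign flips, and this flip is responsible for the qualitatively different behaviour of the exponential contribution in the two sectors. Concretely, when the Dawson expansion is fed into $w(z)=e^{-z^2}+\tfrac{2i}{\sqrt\pi}\text{daw}\,z$, the combination $\mp1\cdot e^{-z^2}$ either cancels the standalone $e^{-z^2}$ (upper sector, leaving the $-\tfrac{\sqrt\pi}{2z^2}$ correction of (\ref{eqn:3.33a})) or reinforces it (lower sector, producing the factor $2$ of (\ref{eqn:3.33b})). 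Verifying that one has attached the correct sign to the correct sector---and checking that the exponential and algebraic groups are correctly ordered relative to one another, given that $\mathrm{Re}(z^2)$ itself changes sign within each sector---is the delicate bookkeeping the proof must execute; once the sign and the sector are matched, the remaining manipulations are routine.
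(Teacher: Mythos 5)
Your proposal is correct and takes essentially the same route as the paper: specialize Lemma~\ref{lm:1} to $\alpha=2$ (even case, sectors (\ref{eqn:3.37})--(\ref{eqn:3.38})), multiply by $e^{-z^2}$ to obtain the two Dawson expansions, and then propagate them through the linear identities (\ref{eqn:2.13}), (\ref{eqn:2.14}) and (\ref{eqn:2.15}), exactly as the paper's proof does via (\ref{eqn:3.36}). One remark worth keeping: your propagation step (and the cancellation of the constant $1$ in $G(z)=1+zZ(z)$ that you correctly identify) actually shows the algebraic tail of $Z(z)$ in the upper sector must be $-\frac{1}{z}-\frac{\sqrt{\pi}}{2z^{3}}$ as in (\ref{eqn:3.34b}), so the tail $-\frac{1}{2z}-\frac{\sqrt{\pi}}{4z^{3}}$ printed in (\ref{eqn:3.34a}) is a factor-of-two typo in the theorem statement, not a flaw in your argument.
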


\begin{proof}
For $\alpha=2$, and having in mind that $\alpha=2$ is even, (\ref{eqn:3.21}) becomes
\begin{align}
z\hspace{.075cm}{}_1F_1\left(\frac{1}{2};\frac{3}{2};z^2\right)
&\sim\Gamma\left(\frac{3}{2}\right)e^{\pm i\frac{\pi}{2}}\frac{z}{|z|}\left[1+\frac{\Gamma\left(\frac{3}{2}\right)}{z^2}+O\left(\frac{1}{z^{4}}\right)\right]+\frac{e^{z^2}}{2 z}\left[1+\frac{\Gamma\left(\frac{3}{2}\right)}{z^2}+O\left(\frac{1}{z^{4}}\right)\right]\nonumber\\&
=\pm i\frac{\sqrt{\pi}}{2}\frac{z}{|z|}\left[1+\frac{\sqrt{\pi}}{2
z^2}+O\left(\frac{1}{z^{4}}\right)\right]+\frac{e^{z^2}}{2 z}\left[1+\frac{\sqrt{\pi}}{2
z^2}+O\left(\frac{1}{z^{4}}\right)\right],
\label{eqn:3.36}
\end{align}
where the positive sign is taken if

\begin{equation}
-\frac{\pi}{4}+k\pi <\text{arg}(z)< \frac{3\pi}{4}+k\pi,
\label{eqn:3.37}
\end{equation}
while the negative sign is taken if

\begin{equation}
-\frac{3\pi}{4}+k\pi <\text{arg}(z)< -\frac{\pi}{4}+k\pi,
\label{eqn:3.38}
\end{equation}
$k=0,1,2,\cdot\cdot\cdot$.

When substituting (\ref{eqn:3.36}) in (\ref{eqn:2.12}), the resulting equation together with (\ref{eqn:3.37}) and (\ref{eqn:3.38}) respectively gives
(\ref{eqn:3.32a}) if $-\frac{\pi}{4}+k\pi <\text{arg}(z)< \frac{3\pi}{4}+k\pi$ and (\ref{eqn:3.32b}) if $-\frac{3\pi}{4}+k\pi <\text{arg}(z)< -\frac{\pi}{4}+k\pi$. And hence, substituting  (\ref{eqn:3.32a}) and (\ref{eqn:3.32b}) in (\ref{eqn:2.13}), (\ref{eqn:2.14}) and (\ref{eqn:2.15}) gives respectively
(\ref{eqn:3.33a}), (\ref{eqn:3.34a}) and (\ref{eqn:3.35a}) if $-\frac{\pi}{4}+k\pi <\text{arg}(z)< \frac{3\pi}{4}+k\pi$ and (\ref{eqn:3.33b}), (\ref{eqn:3.34b}) and (\ref{eqn:3.35b}) if $-\frac{3\pi}{4}+k\pi <\text{arg}(z)< -\frac{\pi}{4}+k\pi$. This ends the proof.

\end{proof}
\subsection{Asymptotic evaluation of Fresnel functions}\label{subsec:3.2}
In this section, we use the asymptotic expansion of the confluent hypergeometric function ${}_1F_1$ to derive the asymptotic expansion of Fresnel's functions $C(z)$ and $S(z)$. And the results are described in Theorem \ref{thm:2}.
\begin{thm} For $|z|\gg1$,
Fresnel's functions $C(z)$ and $S(z)$ are asymptotically given by
\begin{equation}
C(z)\sim \begin{cases} \frac{\sqrt{2}}{4}-\sqrt{\frac{2}{\pi}}\frac{1}{8z^2}-\frac{\sin{(\pi z^2)}}{2\pi z}-\frac{\cos{(\pi z^2)}}{4(\pi)^{3/2} z^3}+O\left(\frac{1}{z^4}\right) , & \text{if} \hspace{.12cm} -\frac{\pi}{2}+k\pi <\text{arg}(z)< \frac{\pi}{2}+k\pi  \\
-\frac{\sqrt{2}}{4}+\sqrt{\frac{2}{\pi}}\frac{1}{8z^2}-\frac{\sin{(\pi z^2)}}{2\pi z}-\frac{\cos{(\pi z^2)}}{4(\pi)^{3/2} z^3}+O\left(\frac{1}{z^4}\right)  , &   \text{if} \hspace{.12cm} -\pi +k\pi <\text{arg}(z)< -\frac{\pi}{2}+k\pi \\
\end{cases}
,
\label{eqn:3.39}
\end{equation}
and
\begin{equation}
S(z)\sim  \begin{cases} \frac{\sqrt{2}}{4}+\sqrt{\frac{2}{\pi}}\frac{1}{8z^2}+\frac{\sin{(\pi z^2)}}{2\pi z}+\frac{\cos{(\pi z^2)}}{4(\pi)^{3/2} z^3}+O\left(\frac{1}{z^4}\right) , & \text{if} \hspace{.12cm} -\frac{\pi}{2}+k\pi <\text{arg}(z)< \frac{\pi}{2}+k\pi  \\
-\frac{\sqrt{2}}{4}-\sqrt{\frac{2}{\pi}}\frac{1}{8z^2}+\frac{\sin{(\pi z^2)}}{2\pi z}+\frac{\cos{(\pi z^2)}}{4(\pi)^{3/2} z^3}+O\left(\frac{1}{z^4}\right) , &   \text{if} \hspace{.12cm} -\pi +k\pi <\text{arg}(z)< -\frac{\pi}{2}+k\pi \\
\end{cases}
,
\label{eqn:3.40}
\end{equation}
where $k=0,1,2,\cdot\cdot\cdot.$
\label{thm:2}
\end{thm}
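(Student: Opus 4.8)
The plan is to collapse both Fresnel functions onto the single complex identity that ties them to the confluent hypergeometric function. By (\ref{eqn:1.6}) and (\ref{eqn:2.16}),
\[
C(z)+iS(z)=\int_0^z e^{i\pi\eta^2}\,d\eta=\frac{e^{i\pi z^2}}{\sqrt{i\pi}}\,\text{daw}\left(\sqrt{i\pi}\,z\right)=z\,{}_1F_1\left(\tfrac12;\tfrac32;i\pi z^2\right).
\]
Since (for real $z$) $C$ and $S$ are exactly the real and imaginary parts of this one object, it suffices to obtain the large-$|z|$ asymptotics of the right-hand side and then separate real and imaginary parts. The most economical route is to feed the complex argument $Z=\sqrt{i\pi}\,z=\sqrt{\pi}\,e^{i\pi/4}z$ into the Dawson asymptotics already established in Theorem \ref{thm:1}; equivalently, one may apply the expansion (\ref{eqn:3.24}) directly with $\xi=i\pi z^2$, $a=\tfrac12$, $b=\tfrac32$, i.e. the $\alpha=2$ case of Lemma \ref{lm:1} with the argument scaled by $i\pi$.

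First I would fix the angular sectors. Because $\arg(Z)=\tfrac{\pi}{4}+\arg(z)$ (equivalently $\arg(\xi)=\tfrac{\pi}{2}+2\arg(z)$), the two sign-regions (\ref{eqn:3.37})--(\ref{eqn:3.38}) of Theorem \ref{thm:1} translate cleanly into $-\tfrac{\pi}{2}+k\pi<\arg(z)<\tfrac{\pi}{2}+k\pi$ for the $+$ sign and $-\pi+k\pi<\arg(z)<-\tfrac{\pi}{2}+k\pi$ for the $-$ sign, which are precisely the two cases appearing in (\ref{eqn:3.39})--(\ref{eqn:3.40}). This step is a short calculation and should be done at the outset so that the branch choice is tied to the correct sector throughout.

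Next I would multiply the Dawson expansion by the prefactor $e^{i\pi z^2}/\sqrt{i\pi}$ and regroup. The key simplification is that the exponentially small branch of $\text{daw}(Z)$ carries the factor $e^{-Z^2}=e^{-i\pi z^2}$, so that $e^{i\pi z^2}e^{-i\pi z^2}=1$ collapses it into a non-oscillatory contribution: this yields the constant $\tfrac12 e^{i\pi/4}=\tfrac{\sqrt2}{4}(1+i)$ together with its algebraic $z^{-2},z^{-4},\dots$ corrections. Conversely, the algebraic branch $\tfrac{1}{2Z}+\cdots$ of $\text{daw}(Z)$, multiplied by $e^{i\pi z^2}$, supplies the oscillatory part. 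Writing $e^{i\pi z^2}=\cos(\pi z^2)+i\sin(\pi z^2)$ and collecting powers of $z$, I would then read off $C(z)=\mathrm{Re}$ and $S(z)=\mathrm{Im}$ in each sector, arriving at (\ref{eqn:3.39}) and (\ref{eqn:3.40}).

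The delicate part is entirely the complex-phase bookkeeping, and it is where the argument must be carried out with care. One must commit to a branch of $\sqrt{i\pi}$ and of $(i\pi z^2)^{1/2}$, track the factor $Z/|Z|$ (equivalently the branch of $(Z^2)^{1/2}$) that distinguishes the two sectors, and choose the correct sign in $e^{\pm i\pi a}=e^{\pm i\pi/2}$. Each of these fixed phases multiplies the oscillatory $e^{i\pi z^2}$, so a single mis-assigned factor of $i$ or $e^{\pm i\pi/4}$ will swap a $\sin(\pi z^2)$ for a $\cos(\pi z^2)$, or flip its sign, once the real and imaginary parts are finally separated. I would therefore pin the non-oscillatory term against the known value $\int_0^\infty e^{i\pi\eta^2}\,d\eta=\tfrac12 e^{i\pi/4}$ (so $C(\infty)=S(\infty)=\tfrac{\sqrt2}{4}$) and use it as an internal consistency check at each stage, since this is exactly the point at which phase errors tend to enter.
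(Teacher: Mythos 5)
Your proposal is correct and follows essentially the same route as the paper: both rest on the identity $C(z)+iS(z)=z\,{}_1F_1\left(\tfrac12;\tfrac32;i\pi z^2\right)$, apply the ${}_1F_1$ asymptotic expansion (\ref{eqn:3.24}) with $\xi=i\pi z^2$, $a=\tfrac12$, $b=\tfrac32$ (your Dawson-function formulation with $Z=\sqrt{i\pi}\,z$ is the same computation), translate the sign sectors via $\text{arg}(\xi)=\tfrac{\pi}{2}+2\,\text{arg}(z)$, and then split the resulting expansion into its non-oscillatory and oscillatory parts to identify $C$ and $S$. Your explicit attention to the branch/phase bookkeeping and the consistency check against $\int_0^\infty e^{i\pi\eta^2}d\eta=\tfrac12 e^{i\pi/4}$ is a welcome refinement of the paper's terser "rearranging terms" step, but it is not a different argument.
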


\begin{proof}
Let us first assume that for $|z|\gg1$, Fresnel's functions are approximated by $C(z)\sim\tilde{C}(z)$ and $S(z)\sim\tilde{S}(z)$. Then using  (\ref{eqn:2.16}), (\ref{eqn:2.17}) and (\ref{eqn:2.18}) yields
\begin{align}
\int\limits_0^z e^{i\pi\eta^2}d\eta&=\frac{e^{{i\pi}{z^2}}}{\sqrt{i\pi}}\text{daw}\left({\sqrt{i\pi}}{z}\right)
=z\hspace{.05cm}{}_1F_1\left(\frac{1}{2};\frac{3}{2};i\pi z^2\right)\label{eqn:3.41}
\\ &=\int\limits_0^z \cos{(\pi\eta^2)}d\eta +i \int\limits_0^z \sin{(\pi\eta^2)}d\eta \label{eqn:3.42}
\\ &=z\hspace{.05cm}{}_1F_2\left(\frac{1}{4};\frac{1}{2},\frac{5}{4};-\frac{\pi z^2}{4}\right)+i \frac{\pi z^3}{3}\hspace{.05cm}{}_1F_2\left(\frac{3}{4};\frac{3}{2},\frac{7}{4};-\frac{\pi z^2}{4}\right) \label{eqn:3.43}
\\ &=C(z)+i S(z)\sim \tilde{C}(z)+i \tilde{S}(z), \hspace{.12cm} |z|\gg1. \label{eqn:3.44}
\end{align}
And we observe that the asymptotic expansion of (\ref{eqn:3.41}) is a sum of two parts $\tilde{C}(z)$ and $\tilde{S}(z)$.

Now, setting $\xi=i\pi z^2, a=\frac{1}{\alpha}=\frac{1}{2}$ and  $b=\frac{1}{\alpha}+1=\frac{3}{2}$ in (\ref{eqn:3.24}), and taking account that $\alpha=2$ is even yields
\begin{multline}
\frac{{}_1F_1\left(\frac{1}{2};\frac{3}{2};i\pi z^2\right)}{\Gamma\left(\frac{3}{2}\right)}=\frac{e^{\pm i\frac{\pi}{2} }}{(i\pi z^2)^{\frac{1}{2}}}\left\{\sum\limits_{n=0}^{R-1}\frac{\left(\frac{1}{2}\right)_n}{n!}(-i\pi z^2)^{-n}+O(|z^2|^{-R})\right\}\\
+\frac{1}{\Gamma\left(\frac{1}{2}\right)}\frac{e^{i\pi z^2}}{i\pi z^2}\left\{\sum\limits_{n=0}^{S-1}\frac{(1)_n \left(\frac{1}{2}\right)_n}{n!}(\xi)^{-n}+O(|\xi|^{-S})\right\}.
\label{eqn:3.45}
\end{multline}
Moreover, $$\xi=i\pi z^2\Rightarrow z=\left(\frac{\xi}{i\pi}\right)^\frac{1}{2}=\left(\frac{|\xi|}{\pi}\right)^\frac{1}{2}e^{i\left(\frac{\text{arg}(\xi)}{2}-\frac{\pi}{4}\right)}$$
that gives
\begin{equation}
\text{arg}(z)=\frac{\text{arg}(\xi)}{2}-\frac{\pi}{4}+k\pi, k=0,1,2,\cdot\cdot\cdot. \label{eqn:3.46}
\end{equation}

Rearranging terms on one hand, while neglecting higher order terms on another hand, yields
\begin{multline}
\int\limits_0^z e^{i\pi\eta^2}d\eta=z\hspace{.05cm}{}_1F_1\left(\frac{1}{2};\frac{3}{2};i\pi z^2\right)=\pm\frac{\sqrt{2}}{4}\mp\sqrt{\frac{2}{\pi}}\frac{1}{8z^2}-\frac{\sin{(\pi z^2)}}{2\pi z}-\frac{\cos{(\pi z^2)}}{4(\pi)^{3/2} z^3}+O\left(\frac{1}{|z|^{4}}\right)\\
+i\left[\pm\frac{\sqrt{2}}{4}\pm\sqrt{\frac{2}{\pi}}\frac{1}{8z^2}+\frac{\sin{(\pi z^2)}}{2\pi z}+\frac{\cos{(\pi z^2)}}{4(\pi)^{3/2} z^3}+O\left(\frac{1}{|z|^{4}}\right)\right].
\label{eqn:3.47}
\end{multline}
where the positive sign is now taken if

\begin{equation}
-\frac{\pi}{2}+k\pi <\text{arg}(z)< \frac{\pi}{2}+k\pi , \label{eqn:3.48}
\end{equation}
and the negative sign if

\begin{equation}
-\pi +k\pi <\text{arg}(z)< -\frac{\pi}{2}+k\pi.
\label{eqn:3.49}
\end{equation}
Hence, comparing (\ref{eqn:3.44}) with (\ref{eqn:3.47}) gives (\ref{eqn:3.39}) and (\ref{eqn:3.40}).
\end{proof}

\subsection{Asymptotic evaluation of Gordeyev's integral}\label{subsec:3.3}
In this section, we derive the asymptotic expansion of Gordeyev's integral using the asymptotic expansion of the confluent hypergeometric function ${}_1F_1$ and present the results in Theorem \ref{thm:3}.

We first observe that for complex $\omega=\omega_r+i\omega_i$ and complex $\nu=\nu_r+i\nu_i$, if we set $\sqrt{\nu_r+i\nu_i}=\tilde{\nu}_r+i\tilde{\nu}_i$, where the subscripts $r$ and $i$ stand for real and imaginary parts respectively, then
\begin{equation}
\frac{\omega-n}{\sqrt{2\nu}}=\frac{(\omega_r+n)\tilde{\nu}_i+\omega_i\tilde{\nu}_i}{\sqrt{2}(\tilde{\nu}_r^2
+\tilde{\nu}_i^2)}+i\frac{\omega_i\tilde{\nu}_r-(\omega_r+n)\tilde{\nu}_i}{\sqrt{2}(\tilde{\nu}_r^2+\tilde{\nu}_i^2)}.
\label{eqn:3.50}
\end{equation}
And so
\begin{equation}
\text{arg}\left(\frac{\omega-n}{\sqrt{2\nu}}\right)
=\arctan\left[\frac{\omega_i\tilde{\nu}_r-(\omega_r+n)\tilde{\nu}_i}{(\omega_r+n)\tilde{\nu}_i+\omega_i\tilde{\nu}_i}\right].
\label{eqn:3.51}
\end{equation}

\begin{thm}
Let $\lambda=\lambda_r+i\lambda_i$, $\omega=\omega_r+i\omega_i$, $\nu=\nu_r+i\nu_i$ and $\sqrt{\nu}=\sqrt{\nu_r+i\nu_i}=\tilde{\nu}_r+i\tilde{\nu}_i$, where the subscripts $r$ and $i$ stand for real and imaginary parts respectively, and let \begin{equation}
\theta=\text{arg}\left(\frac{\omega-n}{\sqrt{2\nu}}\right)
=\arctan\left[\frac{\omega_i\tilde{\nu}_r-(\omega_r+n)\tilde{\nu}_i}{(\omega_r+n)\tilde{\nu}_i+\omega_i\tilde{\nu}_i}\right].
\label{eqn:3.52}
\end{equation}

\begin{enumerate}
\item  Then for any fixed $\nu$ and any fixed $\omega$, Gordeyev's integral $G_\nu(\omega,\lambda)$ is asymptotically given by
\begin{equation}
G_\nu(\omega,\lambda) \sim\frac{-i\omega}{2\sqrt{\pi\nu\lambda}}\sum\limits_{n=-\infty}^{\infty}\left[1-\frac{4n^2-1}{8\lambda}+O\left(\frac{1}{\lambda^2}\right)\right]
Z\left(\frac{\omega-n}{\sqrt{2\nu}}\right),
|\lambda|\gg 1,
\label{eqn:3.53}
\end{equation}
where $Z$ is the plasma dispersion (Fried-Conte) function, see equation (\ref{eqn:1.4}), and $-\frac{\pi}{2}<\text{arg}(\lambda)<\frac{\pi}{2}$.
\item For any fixed $\lambda$, if $|\omega|\gg1 $ and $\nu$ is fixed, or if $|\nu|\rightarrow 0$ and $\omega$ is fixed, then
\begin{multline}
G_\nu(\omega,\lambda)\sim \frac{-i\omega}{\sqrt{2\nu}}e^{-\lambda}\sum\limits_{n=-\infty}^{\infty}I_n(\lambda) \Bigl\{ie^{-\left(\frac{\omega-n}{\sqrt{2\nu}}\right)^2}\left[-\frac{\pi}{2}\left(\frac{\sqrt{2\nu}}{\omega-n}\right)^2+O\left(\frac{\sqrt{2\nu}}{\omega-n}\right)^4\right]
\\-\frac{1}{2}\left(\frac{\sqrt{2\nu}}{\omega-n}\right)-\frac{\sqrt{\pi}}{4}\left(\frac{\sqrt{2\nu}}{\omega-n}\right)^3+O\left(\frac{\sqrt{2\nu}}{\omega-n}\right)^5\Bigr\},
\label{eqn:3.54}
\end{multline}
if $-\frac{\pi}{4}+k\pi <\theta< \frac{3\pi}{4}+k\pi, \hspace{.12cm} k=0,1,2,\cdot\cdot\cdot$. And
\begin{multline}
G_\nu(\omega,\lambda)\sim \frac{-i\omega}{\sqrt{2\nu}}e^{-\lambda}\sum\limits_{n=-\infty}^{\infty}I_n(\lambda) \Bigl\{i\sqrt{\pi}e^{\left(\frac{\omega-n}{\sqrt{2\nu}}\right)^2}\left[2+\frac{\sqrt{\pi}}{2}\left(\frac{\sqrt{2\nu}}{\omega-n}\right)^2+O\left(\frac{\sqrt{2\nu}}{\omega-n}\right)^4\right]
\\-\frac{1}{2}\left(\frac{\sqrt{2\nu}}{\omega-n}\right)-\frac{\sqrt{\pi}}{4}\left(\frac{\sqrt{2\nu}}{\omega-n}\right)^3+O\left(\frac{\sqrt{2\nu}}{\omega-n}\right)^5\Bigr\},
\label{eqn:3.55}
\end{multline}
if $-\frac{3\pi}{4}+k\pi <\theta< -\frac{\pi}{4}+k\pi$.
\item If $|\lambda|\gg1$ and $-\frac{\pi}{2}<\text{arg}(\lambda)<\frac{\pi}{2}$, and $|\omega|\gg1 $ while $\nu$ is fixed, or and $|\nu|\rightarrow 0$ while $\omega$ is fixed, then
\begin{multline}
G_\nu(\omega,\lambda)\sim \frac{-i\omega}{2\sqrt{\pi\nu\lambda}}\sum\limits_{n=-\infty}^{\infty} \Bigl\{ie^{-\left(\frac{\omega-n}{\sqrt{2\nu}}\right)^2}\Bigl[-\frac{\pi}{2}\left(\frac{\sqrt{2\nu}}{\omega-n}\right)^2
+\frac{(4n^2-1)\pi}{16\lambda}\left(\frac{\sqrt{2\nu}}{\omega-n}\right)^2\\+O\left(\frac{\sqrt{2\nu}}{\lambda(\omega-n)}\right)^2\Bigr]
-\frac{1}{2}\left(\frac{\sqrt{2\nu}}{\omega-n}\right)
+\frac{(4n^2-1)}{16\lambda}\left(\frac{\sqrt{2\nu}}{\omega-n}\right)-\frac{\sqrt{\pi}}{4}\left(\frac{\sqrt{2\nu}}{\omega-n}\right)^3
\\+\frac{(4n^2-1)\sqrt{\pi}}{32\lambda}\left(\frac{\sqrt{2\nu}}{\omega-n}\right)^3
+O\left(\frac{\sqrt{2\nu}}{\lambda^2(\omega-n)}\right)\Bigr\},\hspace{2.1cm}
\label{eqn:3.56}
\end{multline}
if $-\frac{\pi}{4}+k\pi <\theta< \frac{3\pi}{4}+k\pi, \hspace{.12cm} k=0,1,2,\cdot\cdot\cdot$. And
\begin{multline}
G_\nu(\omega,\lambda)\sim \frac{-i\omega}{2\sqrt{\pi\nu\lambda}}\sum\limits_{n=-\infty}^{\infty} \Bigl\{i\sqrt{\pi}e^{-\left(\frac{\omega-n}{\sqrt{2\nu}}\right)^2}\Bigl[2-\frac{(4n^2-1)}{4\lambda}+\frac{\sqrt{\pi}}{2}\left(\frac{\sqrt{2\nu}}{\omega-n}\right)^2
\\-\frac{(4n^2-1)\sqrt{\pi}}{16\lambda}\left(\frac{\sqrt{2\nu}}{\omega-n}\right)^2+O\left(\frac{\sqrt{2\nu}}{\lambda(\omega-n)}\right)^2\Bigr]
-\frac{1}{2}\left(\frac{\sqrt{2\nu}}{\omega-n}\right)
+\frac{(4n^2-1)}{16\lambda}\left(\frac{\sqrt{2\nu}}{\omega-n}\right)\\-\frac{\sqrt{\pi}}{4}\left(\frac{\sqrt{2\nu}}{\omega-n}\right)^3
+\frac{(4n^2-1)\sqrt{\pi}}{32\lambda}\left(\frac{\sqrt{2\nu}}{\omega-n}\right)^3
+O\left(\frac{\sqrt{2\nu}}{\lambda^2(\omega-n)}\right)\Bigr\},\hspace{0cm}
\label{eqn:3.57}
\end{multline}
if $-\frac{3\pi}{4}+k\pi <\theta< -\frac{\pi}{4}+k\pi, \hspace{.12cm} k=0,1,2,\cdot\cdot\cdot$.
\end{enumerate}
\label{thm:3}
\end{thm}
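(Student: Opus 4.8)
The plan is to start from the series representation (\ref{eqn:1.8}) (equivalently (\ref{eqn:2.19})),
$$G_\nu(\omega,\lambda)=\frac{-i\omega}{\sqrt{2\nu}}e^{-\lambda}\sum_{n=-\infty}^{\infty}I_n(\lambda)Z\left(\frac{\omega-n}{\sqrt{2\nu}}\right),$$
and to obtain each of the three regimes by feeding in the appropriate large-parameter expansion of the two building blocks: the modified Bessel function $I_n(\lambda)$ for $|\lambda|\gg1$, and the Fried--Conte function $Z$ for large argument supplied by Theorem \ref{thm:1}. For part (1), I would invoke the standard large-argument expansion of the modified Bessel function (\cite{AS}, formula 9.7.1),
$$I_n(\lambda)\sim\frac{e^{\lambda}}{\sqrt{2\pi\lambda}}\left[1-\frac{4n^2-1}{8\lambda}+O\left(\frac{1}{\lambda^2}\right)\right],\quad -\frac{\pi}{2}<\arg(\lambda)<\frac{\pi}{2},$$
so that the factor $e^{-\lambda}I_n(\lambda)$ cancels the exponential growth and leaves $\frac{1}{\sqrt{2\pi\lambda}}\left[1-\frac{4n^2-1}{8\lambda}+\cdots\right]$. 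Substituting this into the series and absorbing $\frac{-i\omega}{\sqrt{2\nu}}\cdot\frac{1}{\sqrt{2\pi\lambda}}=\frac{-i\omega}{2\sqrt{\pi\nu\lambda}}$ reproduces (\ref{eqn:3.53}); the only point to check is that the expansion may be taken inside the infinite sum, which holds for fixed $\omega,\nu$ since $Z\left(\frac{\omega-n}{\sqrt{2\nu}}\right)$ decays in $n$.

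For part (2), I would instead keep $e^{-\lambda}I_n(\lambda)$ untouched and expand $Z$ via Theorem \ref{thm:1}. Setting $z=\frac{\omega-n}{\sqrt{2\nu}}$, the hypotheses $|\omega|\gg1$ (with $\nu$ fixed) or $|\nu|\to0$ (with $\omega$ fixed) both force $|z|\gg1$, so that $\frac{1}{z}=\frac{\sqrt{2\nu}}{\omega-n}$ is the small quantity. Inserting (\ref{eqn:3.34a}) on the sector $-\frac{\pi}{4}+k\pi<\theta<\frac{3\pi}{4}+k\pi$ gives (\ref{eqn:3.54}), while inserting (\ref{eqn:3.34b}) on the complementary sector gives (\ref{eqn:3.55}); here $\theta=\arg(z)$ is the quantity (\ref{eqn:3.52}). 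A subtlety worth recording is that $\theta$ in principle depends on $n$, but in either limiting regime $\omega-n\approx\omega$ (respectively the $n$-dependence is swamped by the factor $1/\sqrt{2\nu}$), so the sector condition is effectively uniform in $n$ and selects a single branch.

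For part (3), I would combine the two expansions: replace $e^{-\lambda}I_n(\lambda)$ by its $|\lambda|\gg1$ form from part (1) and $Z(z)$ by its large-$|z|$ form from Theorem \ref{thm:1} simultaneously, then multiply the two bracketed series and collect terms. Multiplying $\left[1-\frac{4n^2-1}{8\lambda}+\cdots\right]$ against the $Z$-expansion produces exactly the cross terms $\frac{4n^2-1}{16\lambda}(\cdots)$ and $\frac{(4n^2-1)\sqrt{\pi}}{32\lambda}(\cdots)$ recorded in (\ref{eqn:3.56}) and (\ref{eqn:3.57}), the two displayed formulas again corresponding to the two sectors for $\theta$.

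The hard part will be the bookkeeping in part (3): one is expanding in two independent small parameters, $1/\lambda$ and $\frac{1}{z}=\frac{\sqrt{2\nu}}{\omega-n}$, and must decide consistently which mixed orders to retain and how to label the remainder, since products of terms of comparable size in the two parameters are being discarded. Care is also needed to justify that the remainders survive the infinite summation over $n$ (so that the symbols $O(\cdot)$ remain meaningful after summing) and that the single-branch selection of part (2) is compatible with the $\lambda$-expansion, i.e. that the sector condition on $\arg(\lambda)$ and the sector condition on $\theta$ can be imposed at once. Once these uniformity issues are dispatched, each of (\ref{eqn:3.53})--(\ref{eqn:3.57}) follows by direct substitution and term-by-term multiplication.
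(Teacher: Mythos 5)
Your proposal follows essentially the same route as the paper: part (1) by substituting the large-$|\lambda|$ expansion of $I_n(\lambda)$ (formula 9.7.1 of \cite{AS}) into the series (\ref{eqn:2.19}), part (2) by setting $z=\frac{\omega-n}{\sqrt{2\nu}}$ in (\ref{eqn:3.34a})--(\ref{eqn:3.34b}) and substituting into the series, and part (3) by multiplying the two expansions together and collecting terms. The uniformity caveats you flag (interchanging the expansions with the infinite sum, the $n$-dependence of $\theta$) are in fact passed over silently in the paper's proof, so your write-up is, if anything, slightly more careful, but the underlying argument is the same.
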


\begin{proof}
\begin{enumerate}
\item For $|\lambda|\gg1$, we have using formula 9.7.1 in \cite{AS} that
\begin{equation}
I_n(\lambda)=\frac{e^\lambda}{\sqrt{2\pi\lambda}}\left[1-\frac{4n^2-1}{8\lambda}
+O\left(\frac{1}{\lambda^2}\right)\right],\hspace{.12cm}-\frac{\pi}{2}<\text{arg}(\lambda)<\frac{\pi}{2}.
\label{eqn:3.58}
\end{equation}
Substituting in (\ref{eqn:2.19}) gives (\ref{eqn:3.53}).
\item Setting $z=\frac{\omega-n}{\sqrt{2\nu}}$ in (\ref{eqn:3.34a}) and (\ref{eqn:3.34b}), and letting $|\omega|\gg1 $ while $\nu$ is fixed, or and $|\nu|\rightarrow 0$ while $\omega$ is fixed, yields respectively
\begin{multline}
Z\left(\frac{\omega-n}{\sqrt{2\nu}}\right)\sim ie^{-\left(\frac{\omega-n}{\sqrt{2\nu}}\right)^2}\left[-\frac{\pi}{2}\left(\frac{\sqrt{2\nu}}{\omega-n}\right)^2+O\left(\frac{\sqrt{2\nu}}{\omega-n}\right)^4\right]
\\-\frac{1}{2}\left(\frac{\sqrt{2\nu}}{\omega-n}\right)-\frac{\sqrt{\pi}}{4}\left(\frac{\sqrt{2\nu}}{\omega-n}\right)^3+O\left(\frac{\sqrt{2\nu}}{\omega-n}\right)^5,
\label{eqn:3.59}
\end{multline}
if $-\frac{\pi}{4}+k\pi <\theta=\text{arg}\left(\frac{\omega-n}{\sqrt{2\nu}}\right)< \frac{3\pi}{4}+k\pi, \hspace{.12cm} k=0,1,2,\cdot\cdot\cdot$. And
\begin{multline}
Z\left(\frac{\omega-n}{\sqrt{2\nu}}\right)\sim i\sqrt{\pi}e^{\left(\frac{\omega-n}{\sqrt{2\nu}}\right)^2}\left[2+\frac{\sqrt{\pi}}{2}\left(\frac{\sqrt{2\nu}}{\omega-n}\right)^2+O\left(\frac{\sqrt{2\nu}}{\omega-n}\right)^4\right]
\\-\frac{1}{2}\left(\frac{\sqrt{2\nu}}{\omega-n}\right)-\frac{\sqrt{\pi}}{4}\left(\frac{\sqrt{2\nu}}{\omega-n}\right)^3+O\left(\frac{\sqrt{2\nu}}{\omega-n}\right)^5,
\label{eqn:3.60}
\end{multline}
if $-\frac{3\pi}{4}+k\pi <\theta=\text{arg}\left(\frac{\omega-n}{\sqrt{2\nu}}\right)< -\frac{\pi}{4}+k\pi, \hspace{.12cm} k=0,1,2,\cdot\cdot\cdot$. Hence, substituting (\ref{eqn:3.59}) and (\ref{eqn:3.60}) in (\ref{eqn:2.20}) respectively gives (\ref{eqn:3.54}) and (\ref{eqn:3.55}).
\item On the other hand, combining (\ref{eqn:3.54}) and (\ref{eqn:3.55}) with (\ref{eqn:3.53}) respectively gives (\ref{eqn:3.56}) and (\ref{eqn:3.57}).
\end{enumerate}
\end{proof}
\section{Discussion and conclusions}\label{sec:4}

Having evaluated Dawson's integral in terms of the confluent hypergeometric function, other related functions including the complex error (Faddeeva's integral), Fried-Conte (plasma dispersion) function, (Jackson) function, Fresnel functions and Gordeyev's integral were also evaluated in terms of the confluent hypergeometric function .

Using the asymptotic expansions of the confluent hypergeometric function, the asymptotic expansion for $|z|\gg 1$ of Dawson's integral were derived and consequently the asymptotic expansions of the complex error function (Faddeeva's integral), Fried-Conte (plasma dispersion) function, (Jackson) function, Fresnel functions and Gordeyev's integral were evaluated (Theorem \ref{thm:1}, Theorem \ref{thm:2} and Theorem \ref{thm:3}). To obtain, on the other hand, the asymptotic expansion of these functions for small arguments $|z|\ll1$ one should keep the first few terms in the Taylor series representing the confluent hypergeometric function.

It is also important to point that asymptotic expansions of Gordeyev's integral that takes into account the properties of an electromagnetic wave propagating in a hot plasma, which are the wave frequency, the perpendicular and parallel components of the wave vector, were carefully derived (Theorem \ref{thm:3}).

Moreover, writing these functions in terms confluent hypergeometric function confirms once again that these functions are entire on the whole complex plane $\mathbb{C}$ since the confluent hypergeometric function is entire on the whole complex plane $\mathbb{C}$.


\end{document}